\newcommand{\sgn}	{\operatorname{sgn}}
\newcommand{\Ch}        {\operatorname{Ch}}
\renewcommand{\SS}      {\operatorname{SS}}
\newcommand{\CD}        {{\mathcal{D}}}
\newcommand{\CF}        {{\mathcal{F}}}
\newcommand{\CJ}        {{\mathcal{J}}}
\newcommand{\SpS}	{\operatorname{Sp}^\Sigma}
\newcommand{\Z}         {{\mathbb{Z}}}
\newcommand{\al}        {\alpha}
\newcommand{\bt}        {\beta} 
\newcommand{\gm}        {\gamma}
\newcommand{\dl}        {\delta}
\newcommand{\ep}        {\epsilon}
\newcommand{\tht}       {\theta}
\newcommand{\sg}        {\sigma}
\newcommand{\Sg}        {\Sigma}
\newcommand{\ot}        {\otimes}
\newcommand{\otb}	{\overline{\otimes}}
\newcommand{\tm}        {\times}
\newcommand{\xra}       {\xrightarrow}
\newcommand{\tge}	{\tau_{\geq 0}}
\newcommand{\tpsi}	{\widetilde{\psi}}
\renewcommand{\:}{\colon}
\newtheorem{theorem}{Theorem}
\newtheorem{lemma}[theorem]{Lemma}
\newtheorem{proposition}[theorem]{Proposition}
\theoremstyle{definition}
\newtheorem{remark}[theorem]{Remark}
\newtheorem{definition}[theorem]{Definition}
\begin{document}
\title{Is $D$ symmetric monoidal?}
\author{N.~P.~Strickland}

\maketitle 

Brooke Shipley has considered~\cite{sh:hza} a certain functor
$D\:\SpS(\Ch^+)\to\Ch$, and proved that it is strongly monoidal: we
have a natural isomorphism $\phi^{A,B}\:D(A)\ot D(B)\to D(A\ot B)$,
which is compatible with the associativity and unit isomorphisms in
$\SpS(\Ch^+)$ and $\Ch$.  The proof of one result in~\cite{sh:hza}
implicitly used the claim that $D$ is \emph{symmetric} monoidal, so we
have a commutative diagram
\[ \xymatrix{
     D(A) \ot D(B) \rto^{\phi^{A,B}} \dto_{\tau^{D(A),D(B)}} &
     D(A\ot B) \dto^{D(\tau^{A,B})} \\
     D(B)\ot D(A) \rto_{\phi^{B,A}} &
     D(B\ot A).
   }
\]
However, in a later note Brooke suggests that $D$ is not in fact
symmetric monoidal, so the relevant theorem needs to be weakened and
given a more elaborate proof.

I think that $D$ is in fact symmetric monoidal; this note is an
attempt to verify this.

We first give the relevant definitions.

\begin{definition}
 $\Ch$ is the category of chain complexes of abelian groups, with
 differential decreasing degrees by one.  $\Ch^+$ is the subcategory
 of chain complexes $C_*$ with $C_k=0$ for $k<0$.  These are
 symmetric monoidal categories, with the usual rules:
 \begin{align*}
  (A\ot B)_n &= \bigoplus_{n=i+j} A_i\ot B_j \\
  d(a\ot b) &= d(a)\ot b + (-1)^{|a|} a\ot d(b) \\
  \tau(a\ot b) &= (-1)^{|a||b|} b\ot a.
 \end{align*}
 We write $\Z[n]$ for the chain complex generated additively by a
 single element $e_n\in\Z[n]_n$ with $d(e_n)=0$.  For $n\geq 0$ we let
 $\Sg_n$ act on $\Z[n]$ and $\Z[-n]$ by $\sg_*(e_n)=\sgn(\sg)e_n$ and
 $\sg_*(e_{-n})=\sgn(\sg)e_{-n}$. 
\end{definition}

\begin{definition}
 We let $\CJ$ be the following symmetric monoidal category.  The
 objects are natural numbers, where $n$ is identified with
 $\{0,1,\dotsc,n-1\}$ in the usual way.  The morphisms are injective
 functions.  The monoidal product is given on objects by
 $n\Box m=n+m$.  For morphisms $f\:p\to q$ and $g\:m\to n$ we define 
 $f\Box g\:p+m\to q+n$ by 
 \[ (f\Box g)(i) =
      \begin{cases}
       f(i) & \text{ if } 0 \leq i < p \\
       g(i-p)+q & \text{ if } p\leq i < p+m.
      \end{cases}
 \]
 The associator maps $(m\Box n)\Box p\to m\Box(n\Box p)$ are
 identities, and the twist maps $\tau^{m,n}\:m\Box n\to n\Box m$ are
 given by 
 \[ \tau^{m,n}(i) =
     \begin{cases}
      i + n & \text{ if } 0\leq i < m \\
      i - m & \text{ if } m\leq i < m+n.
     \end{cases}
 \]
 This is a permutation of signature $(-1)^{nm}$.

 We write $\CF$ for the subcategory with the same objects as $\CJ$,
 where the morphisms are bijections.
\end{definition}

\begin{definition}
 We write $\SS(\Ch^+)$ for the category of functors $A\:\CF\to\Ch^+$.
 We make this a symmetric monoidal category by the usual procedure,
 due to Day: given $A,B\in\SS(\Ch^+)$ we have a functor 
 \[ \CF\tm\CF \xra{A\tm B} \Ch^+\tm\Ch^+ \xra{\ot} \Ch^+, \] and we
 take the left Kan extension along the functor $\Box\:\CF\tm\CF\to\CF$
 to get a functor $A\ot B\:\CF\to\Ch^+$.  This can be unwrapped as
 follows: Given $a\in A_{ni}$ and $b\in B_{mj}$ and a bijection
 $\tht\:n\Box m\to p$, we have an element
 $\tht_*(a\ot b)\in(A\ot B)_{p,i+j}$, and $A\ot B$ is generated by
 elements of this form, subject to the rule
 \[ (\tht\circ(\al\Box\bt))_*(a\ot b) = 
      \tht_*(\al_*(a)\ot\bt_*(b)).
 \]
 The twist map $\tau^{A,B}\:A\ot B\to B\ot A$ is given by 
 \[ \tau^{A,B}(\tht_*(a\ot b)) = 
     (\tht\circ\tau^{m,n})_*(\tau^{A_n,B_m}(a\ot b)) = 
     (-1)^{ij}(\tht\circ\tau^{m,n})_*(b\ot a). 
 \]
 The $\CF$-action is just
 $\gm_*(\tht_*(a\ot b))=(\gm\circ\tht)_*(a\ot b)$.  
\end{definition}

\begin{remark}
 If we tried to define the twist map by 
 \[ \tau^{A,B}(\tht_*(a\ot b)) = 
     \tht_*(\tau^{A_n,B_m}(a\ot b)) = 
     (-1)^{ij}\tht_*(b\ot a). 
 \]
 then this would not be well-defined (because of the relation 
 $(\tht\circ(\al\Box\bt))_*(a\ot b)=\tht_*(\al_*(a)\ot\bt_*(b))$) 
 and various other things would go wrong.  It is conceivable that this
 is behind the idea that $D$ is not symmetric monoidal.
\end{remark}

\begin{remark}
 If we have an isomorphism $\tht\:n\Box m\to p$ then we must have
 $p=n\Box m=n+m$, and in that context there is an obvious choice of
 $\tht$, namely the identity.  However, this is an artificial
 consequence of the fact that we are working with a skeletal category,
 and it is usually best to avoid thinking in those terms.  One
 exception, however, is that the above observation allows us to make
 sense of $\sgn(\tht)$, which is needed for our next definition.
\end{remark}

\begin{definition}
 We can regard $\Z[*]$ as an object of $\SS(\Ch^+)$ as follows.  All
 morphisms in $\CF$ are automorphisms and thus permutations; for
 $\al\in\CF(n,n)=\Sg_n$ we define $\al_*\:\Z[n]\to\Z[n]$ by
 $\al_*(e_n)=\sgn(\al)e_n$ as before.  We define
 $\mu\:\Z[*]\ot\Z[*]\to\Z[*]$ by
 $\mu(\tht_*(e_n\ot e_m))=\sgn(\tht)e_{n+m}$.
\end{definition}

\begin{lemma}
 The map $\mu$ makes $\Z[*]$ a commutative, associative and unital
 monoid in $\SS(\Ch^+)$.
\end{lemma}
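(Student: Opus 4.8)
The plan is to reduce every part of the claim to the fact that $\sgn\:\Sg_k\to\{\pm 1\}$ is a homomorphism, together with the two identities $\sgn(\al\Box\bt)=\sgn(\al)\sgn(\bt)$ (a block permutation has signature the product of the signatures of its blocks) and $\sgn(\tau^{m,n})=(-1)^{mn}$ recorded in the definition of $\CJ$. The first thing to check is that $\mu$ is well-defined: by the presentation of $\Z[*]\ot\Z[*]$ this amounts to the equality $\sgn\bigl(\tht\circ(\al\Box\bt)\bigr)=\sgn(\tht)\sgn(\al)\sgn(\bt)$, which is immediate from those facts. It then follows that $\mu$ is a morphism in $\SS(\Ch^+)$: it preserves degrees, since $\tht_*(e_n\ot e_m)$ and $e_{n+m}$ both lie in chain degree $n+m$; it commutes with the differentials trivially, since $\Z[k]$ has zero differential for every $k$ and hence so does $\Z[*]\ot\Z[*]$; and it is $\CF$-equivariant because $\mu(\gm_*\tht_*(e_n\ot e_m))=\sgn(\gm\circ\tht)\,e_{n+m}=\sgn(\gm)\sgn(\tht)\,e_{n+m}=\gm_*\mu(\tht_*(e_n\ot e_m))$.

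For associativity and unitality I would spell out the Day-convolution structure maps on representatives. A typical element of $(\Z[*]\ot\Z[*])\ot\Z[*]$ has the form $\rho_*\bigl(\tht_*(e_n\ot e_m)\ot e_k\bigr)$ with $\tht\:n\Box m\to p$ and $\rho\:p\Box k\to q$ bijections; applying $\mu\ot 1$ and then $\mu$ sends it to $\sgn(\rho)\sgn(\tht)\,e_q$. Since the associator of $\Box$ is the identity, the Day associator carries it to the corresponding element $\xi_*\bigl(e_n\ot(e_m\ot e_k)\bigr)$ of $\Z[*]\ot(\Z[*]\ot\Z[*])$ with $\xi=\rho\circ(\tht\Box 1_k)\:n+m+k\to q$, and applying $1\ot\mu$ and then $\mu$ to this gives $\sgn(\xi)\,e_q$; the two agree because $\sgn(\xi)=\sgn(\rho)\,\sgn(\tht\Box 1_k)=\sgn(\rho)\sgn(\tht)$. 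For unitality, the monoidal unit $\mathbf 1$ of $\SS(\Ch^+)$ is the functor with $\mathbf 1_0=\Z[0]$ and $\mathbf 1_n=0$ for $n>0$, the map $\eta\:\mathbf 1\to\Z[*]$ picking out $e_0$ is the evident unit, and the left unitor $\mathbf 1\ot\Z[*]\to\Z[*]$ is the co-Yoneda isomorphism $\tht_*(1\ot e_m)\mapsto\tht_*e_m=\sgn(\tht)\,e_m$ (with $\tht\in\Sg_m$); then $\mu\circ(\eta\ot 1)$ sends $\tht_*(1\ot e_m)$ to $\mu(\tht_*(e_0\ot e_m))=\sgn(\tht)\,e_m$, which matches, and the right unitor is handled the same way.

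The one substantive point is commutativity, where two signs must cancel. The twist of $\SS(\Ch^+)$, applied with $a=e_n$ (chain degree $n$) and $b=e_m$ (chain degree $m$), is
\[ \tau^{\Z[*],\Z[*]}\bigl(\tht_*(e_n\ot e_m)\bigr)=(-1)^{nm}\,(\tht\circ\tau^{m,n})_*(e_m\ot e_n), \]
so applying $\mu$ and using $\sgn(\tau^{m,n})=(-1)^{mn}$ gives
\[ \mu\Bigl(\tau^{\Z[*],\Z[*]}\bigl(\tht_*(e_n\ot e_m)\bigr)\Bigr)=(-1)^{nm}\,\sgn(\tht)\,\sgn(\tau^{m,n})\,e_{n+m}=(-1)^{nm}(-1)^{mn}\,\sgn(\tht)\,e_{n+m}=\sgn(\tht)\,e_{n+m}, \]
which is exactly $\mu(\tht_*(e_n\ot e_m))$. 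Thus the Koszul sign coming from $\Ch^+$ is precisely cancelled by the signature of the shuffle $\tau^{m,n}$, which is why the sign representations $\Z[n]$, rather than trivial modules, are the right thing to use. I expect the only real difficulty to be bookkeeping: writing the Day-convolution associator, unitors and twist explicitly on generators and keeping straight which maps live in $\CF$ and which in $\CJ$, after which every axiom collapses to multiplicativity of $\sgn$ and the two sign identities quoted at the outset.
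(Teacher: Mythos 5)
Your proof is correct and its essential content — the commutativity check, where the Koszul sign $(-1)^{nm}$ from the twist in $\Ch^+$ cancels against $\sgn(\tau^{m,n})=(-1)^{mn}$ — is exactly the computation the paper gives. The paper dismisses well-definedness, associativity and unitality as routine, whereas you spell them out; your extra details are accurate but do not change the argument.
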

\begin{proof}
 The only point that needs checking is commutativity.  We have
 \[ \tau(\tht_*(e_n\ot e_m))=
     (-1)^{nm}(\tht\tau^{m,n})_*(e_m\ot e_n), \]
 so
 \[ \mu\tau(\tht_*(e_n\ot e_m)) = 
     (-1)^{nm}\sgn(\tht\tau^{m,n})e_{n+m} = 
      \sgn(\tht)e_{n+m} = \mu(\tht_*(e_n\ot e_m)).
 \]
 Thus $\mu\tau=\mu$, as required.
\end{proof}

\begin{definition}
 An object $A\in\SpS(\Ch^+)$ is a module in $\SS(\Ch^+)$ over $\Z[*]$.  
 To unpack this a little, we have a sequence of chain complexes
 $A_n\in\Ch^+$ for $n\geq 0$, with an action of $\Sg_n$ on $A_n$
 and maps 
 \[ \sg\:\Z[k]\ot A_n \to A_{k+n} \]
 satisfying 
 \[ (\al\Box\bt)_*\sg(e_k\ot a) = \sgn(\al)\sg(e_k\ot \bt_*(a)) \]
 for all $\al\in\Sg_k$ and $\bt\in\Sg_m$, as well as the unit
 condition $\sg(e_0\ot a)=a$ and the associativity condition
 \[ \sg(e_i \ot \sg(e_j\ot a)) = \sg(e_{i+j}\ot a). \]
 Given $A,B\in\SpS(\Ch^+)$ we define $A\otb B$ to be their tensor
 product over $\Z[*]$, which is the coequaliser of a pair of maps
 $A\ot\Z[*]\ot B\to A\ot B$.  This makes $\SpS(\Ch^+)$ into a
 symmetric monoidal category.  The elements of $(A\ot B)_{nk}$ can
 again be written in the form $\tht_*(a\ot b)$, but there are some
 extra relations.  To be more explicit, consider a permutation
 $\phi\in\Sg_{n+m+p}$ (which we can regard as an isomorphism
 $(n+m)\Box p\to n+m+p$ or as an isomorphism $n\Box(m+p)\to n+m+p$)
 and elements $a\in A_{ni}$ and $b\in B_{pk}$.  We then have 
 \[ \phi_*(a\ot\sg(e_m\ot b)) =
     (-1)^{im}(\phi\circ(\tau^{m,n}\Box 1_p))_*(\sg(e_m\ot a)\ot b).
 \]
\end{definition}

\begin{definition}
 Given $A\in\SpS(\Ch^+)$, we define a functor $\CD(A)\:\CJ\to\Ch$ as
 follows.  On objects, we just put $\CD(A)_n=\Z[-n]\ot A_n$.  Now
 consider a morphism $\al\:n\to m$ in $\CJ$ (so $n\leq m$).  Let
 $\rho\:n\to m$ be given by $\rho(i)=(m-n)+i$.  One can choose
 $\al'\in\Sg_m$ with $\al'\rho=\al$.  We would like to define
 \[ \al_*(e_{-n}\ot a) = 
     \sgn(\al').(e_{-m}\ot\al'_*\sg(e_{m-n}\ot a)).
 \]
 To see that this is well-defined, let $\al''$ be another permutation
 such that $\al''\rho=\al$.  Then $\al'=\al''\circ(\dl\Box 1)$ for some
 $\dl\in\Sg_{m-n}$, so $\sgn(\al')=\sgn(\al'')\sgn(\dl)$ and
 \[ (\dl\Box 1)_*\sg(e_{m-n}\ot a) = 
     \sgn(\dl)\sg(e_{m-n}\ot a).
 \]
 It follows that
 \begin{align*}
   & \sgn(\al').(e_{-m}\ot\al'_*\sg(e_{m-n}\ot a)) \\
   &= \sgn(\al'')\sgn(\dl).(e_{-m}\ot\al''_*(\dl\Box 1)_*\sg(e_{m-n}\ot a)) \\
   &= \sgn(\al'').(e_{-m}\ot\al''_*\sg(e_{m-n}\ot a))
 \end{align*}
 as required.  One can check directly that this construction gives a
 chain map.

 We next check that we have a functor.  Suppose we have injective
 maps $n\xra{\al}m\xra{\bt}p$.  We can fit these in a diagram as
 follows: 
 \[ \xymatrix{
     n \rto^\al \drto_\rho &
     m \rto^\bt \drto_\rho & 
     p \\
     & m \uto^{\al'} \drto_\rho & 
     p \uto_{\bt'} \\
     & & p \uto_{1_{p-m}\Box \al'}
    }
 \]
 Here $\al'$ and $\bt'$ are bijections.  Put $\gm=\bt\al$ and
 $\gm'=\bt'\circ(1\Box\al')$.  These are related as required for the
 definition of $\gm_*$, so 
 \[ \gm_*(e_{-n}\ot a) 
     = \sgn(\gm').(e_{-p}\ot\gm'_*\sg(e_{p-n}\ot a)).
 \]
 Here $\sgn(\gm')=\sgn(\bt')\sgn(\al')$.  Also, we can write
 $\sg(e_{p-n}\ot a)$ as $\sg(e_{p-m}\ot\sg(e_{m-n}\ot a))$, and
 applying $(1\Box\al')_*$ to this gives
 $\sg(e_{p-m}\ot\al'_*\sg(e_{m-n}\ot a))$.  Thus
 \[ \gm_*(e_{-n}\ot a) =
     \sgn(\bt') e_{-p}\ot \bt'_*\sg(\sgn(\al')e_{p-m}\ot\al'_*\sg(e_{m-n}\ot a)).
 \]
 We see from the definitions that this is the same as
 $\bt_*\al_*(e_{-n}\ot a)$.

 Note that for $\bt\in\CJ(n,n)=\Sg_n$ we have
 $\bt_*(e_{-n}\ot a)=\sgn(\bt).e_{-n}\ot\bt_*(a)$, whereas
 $\rho_*(e_{-n}\ot a)=(-1)^{(m-n)n}e_{-m}\ot\sg(e_{m-n}\ot a)$.

 We now define $D(A)$ to be the colimit of the functor
 $\CD(A)\:\CJ\to\Ch$.  More explicitly, an element $a\in A_{n,k}$
 gives an element $e_{-n}\ot a\in\CD(A)_{n,k-n}$ and thus an element
 $\xi(a)\in D(A)_{k-n}$.  The complex $D(A)$ is generated by elements
 of this form, subject to the rules $\xi(\bt_*(a))=\sgn(\bt)\xi(a)$
 (for $\bt\in\Sg_n$) and $\xi(\sg(e_k\ot a))=\xi(a)$.  As
 $d(e_{-n}\ot a)=(-1)^n e_{-n}\ot da$, we have
 $d\xi(a)=(-1)^n\xi(da)$.  
\end{definition}

\begin{definition}
 We define $\phi^{A,A'}\:D(A)\ot D(A')\to D(A\otb A')$ as follows.  Given
 $a\in A_{n,i}$ and $a'\in A'_{n',i'}$, we have an element
 $\iota_*(a\ot a')\in(A\ot A')_{n+n',i+i'}$, and we define
 \[ \phi^{A,A'}(\xi(a)\ot\xi(a')) =
     (-1)^{n'n+n'i} \xi(\iota_*(a\ot a')). 
 \]
 To see that this is well-defined, we must check various identities.
 Firstly, given $\al\in\Sg_n$ and $\al'\in\Sg_{n'}$ we must have
 \[
  (-1)^{n'n+n'i} \xi(\iota_*(a\ot a')) 
    = (-1)^{n'n+n'i} \sgn(\al)\sgn(\al')\xi(\iota_*(\al_*(a)\ot\al'_*(a'))).
 \]
 Then, we must show that for $r,r'\geq 0$ we have
 \begin{align*}
  (-1)^{n'n+n'i} \xi(\iota_*(a\ot a')) &= 
   (-1)^{n'(n+r)+n'(i+r)} \xi(\iota_*(\sg(e_r\ot a)\ot a')) \\
  (-1)^{n'n+n'i} \xi(\iota_*(a\ot a')) &= 
   (-1)^{(n'+r')n+(n'+r')i} \xi(\iota_*(a\ot \sg(e_{r'}\ot a'))).
 \end{align*}
 For the first of our three identities, we note that
 \begin{align*}
  \iota_*(\al_*(a)\ot\al'_*(a')) &= 
   (\al\Box\al')_*\iota_*(a\ot a') \\
  \xi(\iota_*(\al_*(a)\ot\al'_*(a'))) &= 
   \xi((\al\Box\al')_*\iota_*(a\ot a')) =
   \sgn(\al\Box\al')\xi(\iota_*(a\ot a')) \\
   &= \sgn(\al)\sgn(\al')\xi(\iota(a\ot a')).
 \end{align*}
 For the second, we note that $(-1)^{n'(n+r)+n'(i+r)}=(-1)^{n'n+n'i}$ and
 that $\iota_*(\sg(e_r\ot a)\ot a')=\sg(e_r\ot\iota_*(a\ot a'))$, so
 $\xi(\iota_*(\sg(e_r\ot a)\ot a'))=\xi(\iota_*(a\ot a'))$.  The third
 identity can be deduced from the second using the identity
 \[ \iota_*(a\ot\sg(e_{r'}\ot a')) =
     (-1)^{ir'}(\tau^{r',n}\Box 1_{n'})_*(\sg(e_{r'}\ot a)\ot a')
 \]
 which holds in $(A\otb A')_{r'+n+n',r'+i+i'}$.  

 We next claim that $\phi^{A,B}$ is a chain map.  Indeed, we have 
 \begin{align*}
  d\phi^{A,A'}(\xi(a)\ot\xi(a'))
    &= (-1)^{n'n+n'i} d\xi(\iota_*(a\ot a')) \\
    &= (-1)^{n'n+n'i+n'+n}\xi(d(\iota_*(a\ot a')) \\
    &= (-1)^{n'n+n'i+n'+n}\xi(\iota_*(d(a)\ot a')) + 
       (-1)^{n'n+n'i+n'+n+i}\xi(\iota_*(a\ot d(a'))) \\
  \phi^{A,A'}(d(\xi(a)\ot\xi(a'))) 
    &= \phi^{A,A'}(d(\xi(a))\ot\xi(a')) +
       (-1)^{n+i}\phi^{A,A'}(\xi(a)\ot d(\xi(a'))) \\
    &= (-1)^n\phi^{A,A'}(\xi(d(a))\ot\xi(a')) + 
       (-1)^{n'+n+i}\phi^{A,A'}(\xi(a)\ot\xi(d(a'))) \\
    &= (-1)^{n+n'n+n'(i-1)}\xi(\iota_*(d(a)\ot a')) + 
       (-1)^{n'+n+i+n'n+n'i}\xi(\iota_*(a\ot d(a'))).
 \end{align*}
 These are easily seen to be the same.
\end{definition}

\begin{proposition}\label{prop-D-monoidal}
 $D$ is symmetric monoidal.
\end{proposition}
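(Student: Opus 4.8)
The plan is to reduce the proposition to a single sign computation. Shipley has shown \cite{sh:hza} that $(D,\phi)$ is a monoidal functor whose coherence maps are isomorphisms, so to prove that $D$ is \emph{symmetric} monoidal it remains only to verify that $\phi$ is compatible with the twist maps, i.e.\ that the square displayed in the introduction commutes. (The map $\phi$ written down above is Shipley's map up to the usual sign conventions; in any event the monoidal-functor axioms for it can be rechecked directly — naturality is immediate from the formula, since morphisms of $\SpS(\Ch^+)$ preserve both the level and the internal degree and therefore leave the sign $(-1)^{n'n+n'i}$ alone, and the associativity and unit coherences reduce to elementary identities among signs, the associators and unitors contributing none.) So I would devote the proof to the twist square.

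To check it, fix generators $\xi(a),\xi(b)$ of $D(A),D(B)$ with $a\in A_{n,i}$ and $b\in B_{m,j}$, so $|\xi(a)|=i-n$ and $|\xi(b)|=j-m$, and chase $\xi(a)\ot\xi(b)$ around both ways. Going right then down: $\phi^{A,B}$ gives $(-1)^{mn+mi}\xi(\iota_*(a\ot b))$, and then one forms $\xi\bigl(\tau^{A,B}(\iota_*(a\ot b))\bigr)$. By the formula for $\tau^{A,B}$ we have $\tau^{A,B}(\iota_*(a\ot b))=(-1)^{ij}(\iota\circ\tau^{m,n})_*(b\ot a)$; I would rewrite the permutation $\iota\circ\tau^{m,n}$ of $m+n$ as $\tau^{m,n}\in\Sg_{m+n}$ acting on the canonical generator $\iota_*(b\ot a)$ of $(B\otb A)_{m+n}$ and use $\xi(\tau^{m,n}_*(-))=\sgn(\tau^{m,n})\xi(-)=(-1)^{mn}\xi(-)$, so that this route lands on $(-1)^{mn+mi}(-1)^{ij}(-1)^{mn}\,\xi(\iota_*(b\ot a))$. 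Going down then right: $\tau^{D(A),D(B)}$ gives $(-1)^{(i-n)(j-m)}\,\xi(b)\ot\xi(a)$ and then $\phi^{B,A}$ gives $(-1)^{(i-n)(j-m)}(-1)^{nm+nj}\,\xi(\iota_*(b\ot a))$. The square commutes because
\[ (-1)^{mn+mi}\cdot(-1)^{ij}\cdot(-1)^{mn}\;=\;(-1)^{mi+ij}\;=\;(-1)^{nm+nj}\cdot(-1)^{(i-n)(j-m)}, \]
the last equality being the expansion $(i-n)(j-m)=ij-im-nj+nm$ reduced modulo $2$.

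The only real obstacle, I expect, is keeping the signs straight — and in particular noticing that two corrections very nearly cancel: the factor $\sgn(\tau^{m,n})=(-1)^{mn}$ that is forced into the twist on $\SS(\Ch^+)$ (recall from the Remark that the naive formula $\tau^{A,B}(\tht_*(a\ot b))=(-1)^{ij}\tht_*(b\ot a)$ is not even well defined), together with the Koszul sign $(-1)^{(i-n)(j-m)}$ coming from $\tau$ in $\Ch$. The asymmetric summand $(-1)^{n'i}$ in the definition of $\phi^{A,A'}$ is calibrated exactly so that, after these cancellations, the two composites around the square agree; I suspect the impression that $D$ fails to be symmetric monoidal arose from the ill-defined naive twist, or from an uncalibrated choice of sign in $\phi$.
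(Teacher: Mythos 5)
Your proposal is correct and follows essentially the same route as the paper: a direct diagram chase on generators $\xi(a)\ot\xi(b)$ with $a\in A_{n,i}$, $b\in B_{m,j}$, tracking the Koszul sign $(-1)^{(i-n)(j-m)}$ on one side against the twist sign $(-1)^{ij}$ together with $\sgn(\tau^{m,n})=(-1)^{mn}$ on the other, and both sides reduce to $(-1)^{mi+ij}\xi(\iota_*(b\ot a))$ exactly as in the paper. Your surrounding remarks on why the naive twist would break the computation match the paper's own diagnosis.
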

\begin{proof}
 We must show that the following diagram commutes:
 \[ \xymatrix{
      D(A) \ot D(B) \rto^{\phi^{A,B}} \dto_{\tau^{D(A),D(B)}} &
      D(A\otb B) \dto^{D(\tau^{A,B})} \\
      D(B)\ot D(A) \rto_{\phi^{B,A}} &
      D(B\otb A).
    }
 \]
 Consider $a\in A_{n,i}$ and $b\in B_{m,j}$ as before.  We have
 \[ \tau^{D(A),D(B)}(\xi(a)\ot\xi(b))=(-1)^{mn+mi+nj+ij}\xi(b)\ot\xi(a) \]
 and so 
 \begin{align*}
   \phi^{B,A}\tau^{D(A),D(B)}(\xi(a)\ot\xi(b)) 
    &= (-1)^{mn+mi+nj+ij}\phi^{A,B}(\xi(b)\ot\xi(a)) \\
    &= (-1)^{mn+mi+nj+ij}(-1)^{nm+nj}\xi(\iota_*(b\ot a)) \\
    &= (-1)^{mi+ij}\xi(\iota_*(b\ot a)).
 \end{align*}
 On the other hand, we have
 $\phi^{A,B}(\xi(a)\ot\xi(b))=(-1)^{mn+mi}\xi(\iota_*(a\ot b))$, and
 $\tau^{A,B}(\iota_*(a\ot b))=(-1)^{ij}\tau^{m,n}_*(b\ot a)$, so 
 \begin{align*}
  D(\tau)(\phi^{A,B}(\xi(a)\ot\xi(b))) 
   &= (-1)^{mn+mi}D(\tau^{A,B})(\xi(\iota_*(a\ot b))) \\
   &= (-1)^{mn+mi}\xi(\tau^{A,B}(\iota_*(a\ot b))) \\
   &= (-1)^{mn+mi+ij}\xi(\tau^{m,n}_*(b\ot a)) \\
   &= (-1)^{mn+mi+ij}\sgn(\tau^{m,n})\xi(\iota_*(b\ot a)) \\
   &= (-1)^{mi+ij}\xi(\iota_*(b\ot a)).
 \end{align*}
 It follows that the diagram commutes, as required.
\end{proof}

\begin{definition}
 Given $C\in\Ch$ we define $\tge C\in\Ch^+$ by 
 \[ (\tge C)_n = 
     \begin{cases}
      0 & \text{ if } n < 0 \\
      \ker(d\:C_0\to C_{-1}) & \text{ if } n = 0 \\
      C_n & \text{ if } n > 0.
     \end{cases}
 \]
 It is well-known that this gives a functor, right adjoint to the
 inclusion of $\Ch^+$ in $\Ch$.  
\end{definition}

\begin{definition}
 We define $R\:\Ch\to\SpS(\Ch^+)$ as follows.  Given $C\in\Ch$ and
 $n\geq 0$ we put $(RC)_n=\tge(\Z[n]\ot C)$.  We let $\Sg_n$ act on
 this via the action on $\Z[n]$, so
 $\al_*(e_n\ot c)=\sgn(\al).e_n\ot c$.  The map
 $\mu\:\Z[n]\ot\Z[m]\to\Z[n+m]$ induces a map
 $\sg\:\Z[n]\ot(RC)_m\to(RC)_{n+m}$, given by
 $\sg(e_n\ot(e_m\ot c))=e_{n+m}\ot c$.  This defines an object
 $RC\in\SpS(\Ch^+)$, and there is an obvious way to make it a
 functor. 
\end{definition}

\begin{lemma}
 There is a natural map $\psi=\psi^{C,C'}\:RC\otb RC'\to R(C\ot C')$
 given by 
 \[ \psi(\al_*((e_p\ot c)\ot(e_{p'}\ot c'))) = 
     (-1)^{p'k} \sgn(\al) e_{p+p'}\ot c\ot c'
 \]
 for all $c\in C_{p,k}$ and $c'\in C'_{p',k'}$.  Moreover, this makes
 $R$ a lax symmetric monoidal functor.
\end{lemma}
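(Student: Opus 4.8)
The plan is to check, in order: (a) that the displayed formula for $\psi$ is well-defined on $RC\otb RC'$; (b) that each $\psi^{C,C'}$ is a chain map; (c) naturality in $C$ and $C'$; and (d) the unit, associativity and symmetry coherences. For (a), recall that $RC\otb RC'$ is a quotient of the Day convolution $RC\ot RC'$, so a typical element is a sum of symbols $\al_*((e_p\ot c)\ot(e_{p'}\ot c'))$ with $c$ of degree $k$, $c'$ of degree $k'$, and $\al$ a bijection in $\CF$, and one must check that the formula respects two families of relations. The $\CF$-equivariance relation $(\al\circ(\bt\Box\bt'))_*(x\ot x')=\al_*(\bt_*(x)\ot\bt'_*(x'))$ is immediate, since $\bt_*(e_p\ot c)=\sgn(\bt)\,e_p\ot c$ and $\psi$ carries the factor $\sgn(\al)$, so both sides land on $\sgn(\al)\sgn(\bt)\sgn(\bt')(-1)^{p'k}\,e_{p+p'}\ot c\ot c'$. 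The second family is the defining relation for the tensor product over $\Z[*]$ displayed in the definition of $\SpS(\Ch^+)$, applied with the two factors $e_p\ot c$ and $\sg(e_m\ot(e_{p'}\ot c'))$; using $\sg(e_m\ot(e_{p'}\ot c'))=e_{m+p'}\ot c'$ and the fact that $e_p\ot c$ has total degree $p+k$, the factor $(-1)^{pm}$ coming from the external part of that degree cancels $\sgn(\tau^{m,p})=(-1)^{pm}$, and both sides reduce to $(-1)^{(m+p')k}\sgn(\phi)\,e_{p+m+p'}\ot c\ot c'$. Conceptually, $\psi$ is nothing but the canonical map $\tge(-)\ot\tge(-)\to\tge(-\ot-)$ (lax monoidality of $\tge$) composed with the canonical rearrangement isomorphism $\Z[p]\ot C\ot\Z[p']\ot C'\cong\Z[p+p']\ot C\ot C'$, whose sign on $e_p\ot c\ot e_{p'}\ot c'$ is precisely $(-1)^{p'k}$; from this point of view both well-definedness and compatibility with the truncation $\tge$ become automatic, but I would still spell out the two relation checks above.

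For (b), one uses $d(e_p\ot c)=(-1)^p e_p\ot dc$ together with the Leibniz rule in $RC\otb RC'$ and in $R(C\ot C')$, and finds that both $d\psi$ and $\psi d$ send $\al_*((e_p\ot c)\ot(e_{p'}\ot c'))$ to $(-1)^{p+p'+p'k}\sgn(\al)\,e_{p+p'}\ot dc\ot c'+(-1)^{p+p'+k+p'k}\sgn(\al)\,e_{p+p'}\ot c\ot dc'$, so $\psi$ is a chain map. Part (c) is immediate from the formula: $R(f)$ acts by $e_p\ot c\mapsto e_p\ot f(c)$ and alters neither the signs nor $\al$.

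For (d), first note that the monoidal unit of $\SpS(\Ch^+)$ is $\Z[*]$ itself, and that since $(R\Z)_n=\tge(\Z[n]\ot\Z)=\Z[n]$ with the sign action of $\Sg_n$ and structure maps $\sg(e_n\ot e_m)=e_{n+m}$, we in fact have $R(\Z)=\Z[*]$ on the nose; the unit map $\eta\colon\Z[*]\to R(\Z)$ is then the identity, and the two unit triangles reduce to the monoid structure of $\Z[*]$ from the earlier lemma together with one short sign check using the module relation. Associativity will be a routine diagram chase: chasing $((e_p\ot c)\ot(e_{p'}\ot c'))\ot(e_{p''}\ot c'')$ one way introduces the signs $(-1)^{p'k}$ and then $(-1)^{p''(k+k')}$, the other way $(-1)^{p''k'}$ and then $(-1)^{(p'+p'')k}$, and these agree, while the $\sgn(\al)$ factors match because the associators of $\CF$ are identity permutations. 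For symmetry, applying $\psi\circ\tau^{RC,RC'}$ to $\al_*((e_p\ot c)\ot(e_{p'}\ot c'))$ introduces, on top of $\sgn(\al)\,e_{p+p'}\ot c'\ot c$, the sign $(-1)^{ii'}\sgn(\tau^{p',p})(-1)^{pk'}$ with $i=p+k$ and $i'=p'+k'$, which simplifies modulo $2$ to $(-1)^{kp'+kk'}$; going the other way round, $R(\tau^{C,C'})\circ\psi$ introduces $(-1)^{p'k}(-1)^{kk'}=(-1)^{p'k+kk'}$, which is the same. The only real obstacle throughout is the sign bookkeeping, and it is concentrated in two places: the well-definedness check against the $\Z[*]$-module relation in (a), where $\sgn(\tau^{m,p})$ must be balanced against the external part of the total degree $p+k$, and the symmetry square in (d); everything else is formal.
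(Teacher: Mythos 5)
Your proof is correct and follows essentially the same route as the paper's: the two computations that carry all the content --- the descent of $\psi$ through the $\Z[*]$-module relation, where $\sgn(\tau^{m,p})=(-1)^{pm}$ cancels against the ``external'' part of the degree $p+k$, and the symmetry square, where both composites produce the sign $(-1)^{kp'+kk'}$ --- are exactly the paper's. The only differences are presentational: you verify $\CF$-equivariance and the chain-map property directly from the formula where the paper gets them for free by building $\tpsi_{p,p'}$ as a composite of chain maps and invoking the universal property of the Day convolution, and you additionally spell out the unit and associativity coherences that the paper leaves to the reader.
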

\begin{proof}
 First, we have a map 
 \[ \tpsi_{p,p'} = (-1)^{pp'}.
     (\Z[p]\ot C\ot \Z[p']\ot C'
      \xra{1\ot\tau\ot 1} 
      \Z[p]\ot\Z[p']\ot C\ot C'
      \xra{\mu\ot 1\ot 1}
      \Z[p+p']\ot C\ot C').
 \]
 If we restrict the domain to the subcomplex $(RC)_p\ot(RC')_{p'}$
 (which lies in $\Ch^+$) then the map must factor uniquely through
 $\tge(\Z[p+p']\ot C\ot C')=R(C\ot C')_{p+p'}$.  We thus get a map 
 \[ \tpsi_{p,p'} \: (RC)_p\ot(RC')_{p'} \to 
     R(C\ot C')_{p+p'}.
 \]
 This is easily seen to be equivariant for $\Sg_p\tm\Sg_{p'}$, or in
 other words, natural for $(p,p')\in\CF^2$.  Our description of
 $(RC)\ot(RC')$ as a Kan extension shows that $\tpsi$ induces a map
 $\psi\:(RC)\ot(RC')\to R(C\ot C')$, with the formula
 \[ \psi(\al_*((e_p\ot c)\ot(e_{p'}\ot c'))) = 
     (-1)^{pp'+p'(p+k)} \al_*(e_{p+p'}\ot c\ot c') = 
     (-1)^{p'k} \sgn(\al) e_{p+p'}\ot c\ot c'.
 \]
 We must show that this factors through $(RC)\otb(RC')$, or in other
 words, that the following diagram commutes:
 \[ \xymatrix{
     RC\ot\Z[*]\ot RC' \rrto^{\tau\ot 1} \dto_{1\ot\sg} & & 
     \Z[*]\ot RC\ot RC' \dto^{\sg\ot 1} \\
     RC\ot RC' \rto_\psi & 
     R(C\ot C') &
     RC\ot RC' \lto^\psi
    }
 \]
 The complex $(RC\ot\Z[*]\ot RC')_n$ is spanned by elements
 \[ x=\al_*((e_p\ot c)\ot e_m\ot (e_{p'}\ot c')) \]
 with $c\in C_k$ and $c'\in C'_{k'}$
 and $\al\:p\Box m\Box p'\to n$ in $\CF$.  For such $x$ we have 
 \[ (\tau\ot 1)(x) = 
     (-1)^{m(p+k)}(\al\circ(\tau^{m,p}\Box 1))_*
     (e_m\ot(e_p\ot c)\ot(e_{p'}\ot c')).
 \]
 Applying $\sg\ot 1$ (and recalling that
 $\tau^{m,p}_*e_{p+m}=(-1)^{mp}e_{p+m}$) gives 
 \[ (-1)^{mp+mk}(\al\circ(\tau^{m,p}\Box 1))_*
     ((e_{p+m}\ot c)\ot(e_{p'}\ot c')) = 
    (-1)^{mk}\al_*
     ((e_{p+m}\ot c)\ot(e_{p'}\ot c')).
 \]
 Applying $\psi$ now gives 
 \[ (-1)^{mk+p'k}\sgn(\al) e_{p+m+p'}\ot c\ot c'. \]
 Going the other way around the diagram, we first get 
 \[ (1\ot\sg)(x) = \al_*((e_p\ot c)\ot(e_{m+p'}\ot c')), \]
 and applying $\psi$ to this gives
 \[ (-1)^{(m+p')k}\sgn(\al) e_{p+m+p'}\ot c\ot c',  \]
 which is the same as before.  We thus have a natural map 
 \[ \psi=\psi^{C,C'}\:RC\otb RC'\to R(C\ot C') \]
 as claimed.  We leave it to the reader to check the relevant
 associativity property, showing that this makes $R$ a monoidal
 functor.  We will, however, check the commutativity property, which
 says that the following diagram commutes:
 \[ \xymatrix{
     RC\otb RC' \rto^\psi \dto_\tau &
     R(C\ot C') \dto^{R(\tau)} \\
     RC'\otb RC \rto_\psi &
     R(C'\ot C).
    }
 \]
 The group $(RC\otb RC')_n$ is spanned by elements
 $x=\al_*((e_p\ot c)\ot(e_{p'}\ot c'))$ with $c\in C_k$ and
 $c'\in C'_{k'}$ say.  For such $x$ we have
 \[ \tau(x) = (-1)^{(p+k)(p'+k')}(\al\circ\tau^{p',p})_*
     ((e_{p'}\ot c')\ot(e_p\ot c)).
 \]
 Applying $\psi$ to this gives $e_{p+p'}\ot c'\ot c$ multiplied by the
 sign
 \[ (-1)^{pp'+pk'+kp'+kk'}(-1)^{pk'}\sgn(\al)\sgn(\tau^{p',p}) = 
     (-1)^{kp'+kk'}\sgn(\al).
 \]
 Going the other way around, we have
 $\psi(x)=(-1)^{kp'}\sgn(\al)e_{p+p'}\ot c\ot c'$, and $R(\tau)$
 exchanges $c$ and $c'$ with a sign of $(-1)^{kk'}$, giving an overall
 sign of $kp'+kk'$ as before.
\end{proof}

\begin{lemma}
 There is a natural map $\eta\:A\to RD(A)$ given by
 $\eta(a)=e_n\ot\xi(a)$ for $a\in A_{n,k}$.  
\end{lemma}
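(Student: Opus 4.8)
The plan is to check that, for each $A\in\SpS(\Ch^+)$, the formula $\eta(a)=e_n\ot\xi(a)$ (for $a\in A_{n,k}$) defines a morphism $\eta_A\:A\to RD(A)$ in $\SpS(\Ch^+)$, and that these maps are natural in $A$. This breaks into five points: (i) $\eta(a)$ really lies in $(RD(A))_n=\tge(\Z[n]\ot D(A))$, not merely in $\Z[n]\ot D(A)$; (ii) $\eta$ is a chain map in each level $n$; (iii) $\eta$ is $\Sg_n$-equivariant; (iv) $\eta$ commutes with the structure maps $\sg$; and (v) $\eta$ is natural in $A$. All of these are obtained by substituting the defining relations of $D$ and $R$; I expect the only step that needs a genuine (if short) argument is (i), the degree-zero check forcing $\eta(a)$ into the connective truncation.

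First I would fix $a\in A_{n,k}$. Since $\xi(a)\in D(A)_{k-n}$, the element $e_n\ot\xi(a)$ lies in degree $k$ of $\Z[n]\ot D(A)$, which matches the internal degree of $a$. Because $A_n\in\Ch^+$ we have $A_{n,k}=0$ for $k<0$, so there is nothing to check there; for $k>0$ the element $e_n\ot\xi(a)$ automatically lies in $\tge(\Z[n]\ot D(A))_k=(RD(A))_{n,k}$; and for $k=0$ I would compute, using $d(e_n)=0$ and the identity $d\xi(a)=(-1)^n\xi(da)$ from the definition of $D$,
\[ d(e_n\ot\xi(a)) = (-1)^n e_n\ot d\xi(a) = (-1)^{2n} e_n\ot\xi(da) = e_n\ot\xi(da), \]
which vanishes because $da\in A_{n,-1}=0$. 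Thus $\eta(a)\in(RD(A))_n$, settling (i). Reading the same display for arbitrary $k$ gives $d\eta(a)=\eta(da)$, so $\eta$ is a chain map at each level, settling (ii).

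Next I would treat the $\SpS(\Ch^+)$-structure. For (iii): given $\al\in\Sg_n$, the relation $\xi(\al_*a)=\sgn(\al)\xi(a)$ together with the definition of the $\Sg_n$-action on $RC$ (namely $\al_*(e_n\ot c)=\sgn(\al)e_n\ot c$) gives $\eta(\al_*a)=e_n\ot\sgn(\al)\xi(a)=\al_*(e_n\ot\xi(a))=\al_*\eta(a)$. For (iv): using the relation $\xi(\sg(e_r\ot a))=\xi(a)$ and the formula $\sg(e_r\ot(e_n\ot c))=e_{r+n}\ot c$ defining the structure map of $RD(A)$, one gets
\[ \eta(\sg(e_r\ot a))=e_{r+n}\ot\xi(\sg(e_r\ot a))=e_{r+n}\ot\xi(a)=\sg(e_r\ot(e_n\ot\xi(a)))=\sg(e_r\ot\eta(a)). \]
Combining (i)--(iv), each $\eta_A$ is a morphism in $\SpS(\Ch^+)$.

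Finally, for (v) I would observe that a morphism $f\:A\to A'$ induces $D(f)\:D(A)\to D(A')$ with $D(f)(\xi(a))=\xi(f(a))$, and that $R(g)$ acts at level $n$ by $e_n\ot c\mapsto e_n\ot g(c)$; hence $RD(f)(\eta_A(a))=e_n\ot\xi(f(a))=\eta_{A'}(f(a))$, so $\eta$ is a natural transformation $\mathrm{id}_{\SpS(\Ch^+)}\to RD$. The whole argument is routine bookkeeping with the definitions; the one place I would be careful is the degree-zero verification in (i) that $\eta(a)$ lands in $\tge(\Z[n]\ot D(A))$, which is where the connectivity hypothesis $A_n\in\Ch^+$ and the sign identity $d\xi(a)=(-1)^n\xi(da)$ are actually used.
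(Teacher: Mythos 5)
Your proposal is correct and follows essentially the same route as the paper: substitute the defining relations of $D$ and $R$ to verify the chain-map, equivariance, and $\sg$-compatibility conditions. You additionally spell out the degree-zero check that $\eta(a)$ lands in the truncation $\tge(\Z[n]\ot D(A))$ and the naturality in $A$, both of which the paper leaves implicit; these are welcome but do not change the argument.
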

\begin{proof}
 As $|\xi(a)|=k-n$ and $|e_n|=n$, the specified rule certainly gives a
 natural map of graded abelian groups.  We also have
 \[ d\eta(a) = (-1)^n e_n\ot d(\xi(a)) = (-1)^{2n} e_n\ot\xi(d(a)) =
     \eta(d(a)),
 \]
 so $\eta\:A_n\to(RD(A))_n$ is a chain map.  For $\al\in\Sg_n$ we have 
 \begin{align*}
  \al_*(\eta(a)) &= \al_*(e_n\ot\xi(a)) = \sgn(\al) e_n\ot\xi(a)\\
  \xi(\al_*(a))  &= \sgn(\al)\xi(a) \\ 
  \eta(\al_*(a)) &= e_n\ot\xi(\al_*(a)) = \sgn(\al) e_n\ot\xi(a);
 \end{align*}
 so $\eta$ is equivariant.  We also have
 $\sg(e_p\ot a)\in A_{p+n,p+k}$ with $\xi(\sg(e_p,a))=\xi(a)$, so 
 \begin{align*}
  \eta(\sg(e_p\ot a)) &= e_{p+n}\ot \xi(\sg(e_p\ot a)) \\
   &= e_{p+n}\ot \xi(a) = \sg(e_p\ot(e_n\ot\xi(a))) \\
   &= \sg(e_p\ot\eta(a)).
 \end{align*}
 Thus, $\eta$ is a morphism in $\SpS(\Ch^+)$.
\end{proof}

\begin{lemma}
 There is a natural map $\ep\:DR(C)\to C$ given by
 $\ep(\xi(e_n\ot c))=c$.
\end{lemma}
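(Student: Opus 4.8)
The plan is to build $\ep$ from the universal property of the colimit: since $D(RC)=\operatorname{colim}_{\CJ}\CD(RC)$, a chain map out of $D(RC)$ is the same thing as a family of chain maps $\ep_n\colon\CD(RC)_n\to C$ that is compatible with the morphisms of $\CJ$. First I would observe that $(RC)_n=\tge(\Z[n]\ot C)$ is spanned in each degree by the elements $e_n\ot c$ (with $c$ constrained to be a cycle in the bottom degree), so $\CD(RC)_n=\Z[-n]\ot(RC)_n$ is spanned by the elements $e_{-n}\ot(e_n\ot c)$, and $D(RC)$ by the $\xi(e_n\ot c)$. As $|e_{-n}\ot(e_n\ot c)|=|c|$, the rule $\ep_n(e_{-n}\ot(e_n\ot c))=c$, extended additively, preserves degree. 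Using the identity $d(e_{-n}\ot a)=(-1)^n e_{-n}\ot da$ already noted for $a\in(RC)_n$, together with $d(e_n\ot c)=(-1)^n e_n\ot dc$, one gets $d(e_{-n}\ot(e_n\ot c))=e_{-n}\ot(e_n\ot dc)$, so each $\ep_n$ is a chain map; the truncation in degree $0$ is harmless, since the relevant $c$ there are cycles and $\ep_n$ carries cycles to cycles.

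Next I would check the compatibility condition $\ep_m\circ\al_*=\ep_n$ for every morphism $\al\colon n\to m$ of $\CJ$. Choosing $\al'\in\Sg_m$ with $\al'\rho=\al$ as in the definition of $\al_*$, and using the very rigid structure maps of $RC$, namely $\sg(e_{m-n}\ot(e_n\ot c))=e_m\ot c$ and $\al'_*(e_m\ot c)=\sgn(\al')\,e_m\ot c$, the definition gives
\[ \al_*(e_{-n}\ot(e_n\ot c))=\sgn(\al')\bigl(e_{-m}\ot\al'_*\sg(e_{m-n}\ot(e_n\ot c))\bigr)=\sgn(\al')^2\,e_{-m}\ot(e_m\ot c)=e_{-m}\ot(e_m\ot c), \]
independently of the choice of $\al'$; applying $\ep_m$ then yields $c=\ep_n(e_{-n}\ot(e_n\ot c))$. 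The case $m=n$, where $\al\in\Sg_n$ is a permutation, is the instance $\rho=1$, $\al'=\al$ of this computation. Hence the $\ep_n$ assemble into a chain map $\ep\colon D(RC)\to C$, and by construction $\ep(\xi(e_n\ot c))=\ep_n(e_{-n}\ot(e_n\ot c))=c$. The same thing can also be seen as a direct check that $\ep$ respects the two defining relations of $D(RC)$, via $\bt_*(e_n\ot c)=\sgn(\bt)\,e_n\ot c$ and $\sg(e_p\ot(e_n\ot c))=e_{p+n}\ot c$.

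For naturality, a map $g\colon C\to C'$ in $\Ch$ induces $R(g)\colon RC\to RC'$ sending $e_n\ot c$ to $e_n\ot g(c)$, hence $D(R(g))$ sends $\xi(e_n\ot c)$ to $\xi(e_n\ot g(c))$, and $\ep'(\xi(e_n\ot g(c)))=g(c)=g(\ep(\xi(e_n\ot c)))$, so $\ep$ is natural. I do not expect a genuine obstacle here: the content of the lemma is simply that the structure maps of $RC$ are so rigid that all the signs cancel, and the only point needing a moment's care is the $\tge$-truncation of $(RC)_0$, which is handled as above.
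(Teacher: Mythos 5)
Your proof is correct and is essentially the paper's argument: both verifications come down to the observations that $\al_*(e_n\ot c)=\sgn(\al)\,e_n\ot c$ and $\sg(e_p\ot(e_n\ot c))=e_{p+n}\ot c$ make the two defining relations of $D(RC)$ hold trivially for $\ep$, together with the sign cancellation $d(e_{-n}\ot(e_n\ot c))=e_{-n}\ot(e_n\ot dc)$ showing $\ep$ is a chain map. Packaging this as a cocone over $\CJ$ rather than as a check of the generating relations (and adding the routine naturality check) is only a cosmetic difference, as you yourself note.
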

\begin{proof}
 We must first check that the definition is compatible with the
 relations $\xi(\al_*(a))=\sgn(\al)\xi(a)$ and
 $\xi(\sg(e_m\ot a))=\xi(a)$.  This is easy, because when $a=e_n\ot c$
 we have $\al_*(a)=\sgn(\al)a$ and $\sg(e_m\ot a)=e_{n+m}\ot c$.  We
 thus have a well-defined map of graded abelian groups, and 
 \[ \ep(d(\xi(e_n\ot c))) = 
     \ep((-1)^n\xi(d(e_n\ot c))) =
      (-1)^n\ep(\xi((-1)^n e_n\ot dc)) = 
       \ep(\xi(e_n\ot dc)) = dc = d\ep(\xi(e_n\ot c)),
 \]
 so $\ep$ is a chain map.  
\end{proof}

\begin{proposition}
 The following diagrams commute, showing that $D$ is left adjoint to
 $R$: 
 \[ \xymatrix{
    RC \rto^{\eta_{RC}} \drto_1 &
    RDRC \dto^{R\ep_C} & 
    DA \rto^{D\eta_A} \drto_1 & 
    DRDA \dto^{\ep_{DA}} \\
    & RC 
    & & DA
   }
 \]
\end{proposition}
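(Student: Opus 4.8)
The plan is to verify each of the two triangle identities directly, by evaluating the relevant composite on a generating set and checking that it acts as the identity there; since $\eta$ and $\ep$ have already been shown to be natural transformations, this is all that the standard characterisation of an adjunction requires. Because $D(A)$ is the colimit of the explicit diagram $\CD(A)\:\CJ\to\Ch$ and $(RC)_n$ is the explicit truncation $\tge(\Z[n]\ot C)$, it suffices to test the composites on the evident generators: $D(A)$ is generated by the elements $\xi(a)$ for $a\in A_{n,k}$, and $(RC)_n$ is generated by the elements $e_n\ot c$ for $c\in C$ (in internal degree $0$ one must restrict to cycles, and in negative internal degrees $(RC)_n$ is zero, but in all cases these elements span). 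I would also use the two functoriality facts $D(f)(\xi(a))=\xi(f(a))$ for a morphism $f$ in $\SpS(\Ch^+)$ and $(Rf)_n(e_n\ot c)=e_n\ot f(c)$ for a morphism $f$ in $\Ch$; both are immediate from the constructions of $D$ and $R$ on morphisms, and no sign intervenes because such an $f$ has internal degree $0$.

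For the first triangle I would take $c\in C$ and trace $e_n\ot c\in(RC)_n$ around $RC\xra{\eta_{RC}}RDRC\xra{R\ep_C}RC$. The definition of $\eta$ applied to the object $RC$ gives $\eta_{RC}(e_n\ot c)=e_n\ot\xi(e_n\ot c)$ in $(RDRC)_n$, and then $R\ep_C$ replaces the inner factor $\xi(e_n\ot c)\in DRC$ by $\ep_C(\xi(e_n\ot c))=c$, returning $e_n\ot c$. Hence the composite is $1_{RC}$.

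For the second triangle I would take $a\in A_{n,k}$ and trace $\xi(a)\in D(A)_{k-n}$ around $DA\xra{D\eta_A}DRDA\xra{\ep_{DA}}DA$. Using $D(f)(\xi(a))=\xi(f(a))$ with $f=\eta_A$ together with the formula for $\eta_A$, the first map sends $\xi(a)$ to $\xi(e_n\ot\xi(a))$; here $e_n\ot\xi(a)$ is an element of $(RD(A))_{n,k}$, its internal degree being $n+|\xi(a)|=n+(k-n)=k$, so $\ep_{DA}$ does apply to $\xi(e_n\ot\xi(a))$ and, by the formula $\ep(\xi(e_n\ot c))=c$ with $c=\xi(a)$, returns $\xi(a)$. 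Hence the composite is $1_{DA}$.

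The hard part, such as it is, will be purely bookkeeping: the symbols $\xi$, $\eta$, $\ep$ are each reused for colimit maps and for the unit and counit evaluated at several different objects (some in $\Ch$, some in $\SpS(\Ch^+)$), so the only real risk is conflating two different instances. The one genuinely substantive point to flag is the truncation $\tge$ in the definition of $R$: one should remark that passing to $\tge$ does not destroy the property that the elements $e_n\ot c$ generate $(RC)_n$, so that checking on these elements remains legitimate. With the two triangle identities in hand, the adjunction $D\dashv R$ follows.
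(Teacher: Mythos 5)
Your proposal is correct and is essentially the paper's own argument: the paper dismisses the proposition as ``immediate from the formulae,'' and your computation is precisely the unwinding of those formulae, tracing $e_n\ot c$ through $R\ep_C\circ\eta_{RC}$ and $\xi(a)$ through $\ep_{DA}\circ D\eta_A$. The extra care you take over the truncation $\tge$ and over which instance of $\eta$, $\ep$, $\xi$ is in play is sensible bookkeeping but does not change the route.
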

\begin{proof}
 This is immediate from the formulae.
\end{proof}

\begin{definition}
 We let $\chi\:D(A\otb A')\to DA\ot DA'$ be conjugate to
 $\psi\:RC\otb RC'\to R(C\ot C')$.  More explicitly, $\chi$ is given
 by the composite
 \[ D(A\otb A') \xra{D(\eta\ot\eta)} 
    D(RDA\otb RDA') \xra{D(\psi)}
    DR(DA\otb DA') \xra{\ep}
    DA\ot DA'.
 \]
 From the definitions, we see that when $a\in A_{n,i}$ and
 $a'\in A_{n',i'}$ we have
 \[ \chi(\xi(\al_*(a\ot a'))) = 
     (-1)^{n'i+n'n}\sgn(\al) \xi(a)\ot\xi(a').
 \]
\end{definition}
\begin{remark}
 As $(R,\psi)$ is lax symmetric monoidal, it is formal to check that
 $(D,\chi)$ is lax symmetric comonoidal.
\end{remark}

\begin{lemma}
 $\chi\:D(A\otb A')\to DA\ot DA'$ is inverse to
 $\phi\:DA\ot DA'\to D(A\ot A')$.
\end{lemma}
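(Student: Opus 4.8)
The plan is a direct computation. Both $\phi$ and $\chi$ are chain maps, and each is determined by its effect on a set of additive generators, so it suffices to check that the composites $\chi\circ\phi$ and $\phi\circ\chi$ act as the identity on generators. The group $DA\ot DA'$ is spanned by the elements $\xi(a)\ot\xi(a')$ with $a\in A_{n,i}$ and $a'\in A'_{n',i'}$, while $D(A\otb A')$ is spanned by the elements $\xi(\al_*(a\ot a'))$, where $\al$ ranges over bijections $n\Box n'\to n+n'$ (equivalently, over $\Sg_{n+n'}$ in the skeletal picture); the latter span because $A\otb A'$ is generated by elements $\al_*(a\ot a')$ and $\xi$ is additive.

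First I would compute $\chi\circ\phi$. Starting from $\xi(a)\ot\xi(a')$, the map $\phi$ gives $(-1)^{n'n+n'i}\xi(\iota_*(a\ot a'))$, where $\iota$ is the canonical (identity) bijection $n\Box n'\to n+n'$, so $\sgn(\iota)=1$. Applying the formula for $\chi$ with $\al=\iota$ then yields $(-1)^{n'n+n'i}(-1)^{n'i+n'n}\xi(a)\ot\xi(a')=\xi(a)\ot\xi(a')$, the two signs cancelling. Hence $\chi\circ\phi=1$.

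Next I would compute $\phi\circ\chi$. Applying $\chi$ to $\xi(\al_*(a\ot a'))$ gives $(-1)^{n'i+n'n}\sgn(\al)\,\xi(a)\ot\xi(a')$, and then $\phi$ sends this to $(-1)^{n'i+n'n}\sgn(\al)(-1)^{n'n+n'i}\xi(\iota_*(a\ot a'))=\sgn(\al)\,\xi(\iota_*(a\ot a'))$. It remains to recognise this as $\xi(\al_*(a\ot a'))$; but $\al_*(a\ot a')=\al_*\iota_*(a\ot a')$ with $\al$ viewed as an element of $\Sg_{n+n'}$, so the defining relation $\xi(\bt_*x)=\sgn(\bt)\xi(x)$ of $D$, applied with $\bt=\al$ and $x=\iota_*(a\ot a')$, gives $\xi(\al_*(a\ot a'))=\sgn(\al)\,\xi(\iota_*(a\ot a'))$ as needed. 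Hence $\phi\circ\chi=1$ as well, and $\chi$ and $\phi$ are mutually inverse.

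I do not expect a genuine obstacle. The only points needing care are the sign bookkeeping, which is arranged precisely so that the signs in $\phi$ and in $\chi$ cancel in pairs, and the observation that the stray permutation $\al$ carried by a generator $\xi(\al_*(a\ot a'))$ of $D(A\otb A')$ is exactly absorbed by the relation $\xi(\al_*x)=\sgn(\al)\xi(x)$ — which is the very relation encoded by the factor $\sgn(\al)$ in the formula for $\chi$. Since well-definedness of $\phi$ and of $\chi$ has already been established, nothing further is required.
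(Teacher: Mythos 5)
Your proposal is correct and follows essentially the same route as the paper: both verify the two composites on generators, using the explicit sign formulae for $\phi$ and $\chi$ together with the relation $\xi(\al_*(a\ot a'))=\sgn(\al)\xi(\iota_*(a\ot a'))$ to absorb the permutation. Your write-up simply spells out the sign cancellations that the paper leaves as "immediate."
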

\begin{proof}
 Recall the formula
 \[ \phi(\xi(a)\ot\xi(a')) = (-1)^{n'n+n'i} \xi(\iota_*(a\ot a')). \]
 It follows immediately that $\chi\phi=1$, and if we recall that
 $\xi(\al_*(a\ot a'))=\sgn(\al)\xi(\iota_*(a\ot a'))$, it also follows
 that $\phi\chi=1$.
\end{proof}

\begin{remark}
 As $(D,\chi)$ is symmetric comonoidal and $\chi$ is an isomorphism,
 it follows that $(D,\psi)=(D,\chi^{-1})$ is symmetric monoidal,
 giving an alternative proof of Proposition~\ref{prop-D-monoidal}.
\end{remark}

\begin{bibdiv}
\begin{biblist}
\bib{sh:hza}{article}{
  author={Shipley, Brooke},
  title={$H\Bbb Z$-algebra spectra are differential graded algebras},
  journal={Amer. J. Math.},
  volume={129},
  date={2007},
  number={2},
  pages={351--379},
  issn={0002-9327},
  review={\MR {2306038}},
}
\end{biblist}
\end{bibdiv}

\end{document}